\documentclass[a4paper,12pt]{article}
\usepackage{amsmath, amsthm, amstext}
\usepackage{amssymb, array, amsfonts}
\usepackage[utf8]{inputenc}
\usepackage{graphicx}

\numberwithin{equation}{section}

\setlength{\topmargin}{-1cm} \setlength{\textheight}{24cm}
\setlength{\oddsidemargin}{-0.5cm}
\setlength{\evensidemargin}{-0.5cm} \setlength{\textwidth}{17cm}

\def \ga{\gamma}
\def \de{\delta}

\def \la{\lambda}

\def \oo{\omega}

\def \G{\Gamma}
\def \D{\Delta}

\def \O{\Omega}

\def \C{\mathbb{C}}
\def \N{\mathbb{N}}
\def \R{\mathbb{R}}
\def \Z{\mathbb{Z}}

\def\n{\nabla}
\def\dd{\partial}

\def\1{1\!\!\!\!1}

\def\mes{\operatorname{mes}}

\def\supp{\operatorname{supp}}

\theoremstyle{plain}
\newtheorem{theorem}{\bf Theorem}[section]
\newtheorem{lemma}[theorem]{\bf Lemma}

\theoremstyle{definition}

\theoremstyle{remark}
\newtheorem{rem}[theorem]{\bf Remark}

\renewcommand{\le}{\leqslant}
\renewcommand{\ge}{\geqslant}

\renewcommand{\qed}{\vrule height7pt width5pt depth0pt}

\title{On the P\'olya conjecture for the Neumann problem in planar convex domains}

\author{N.~Filonov
\thanks{The work is supported by the grant of Russian Science Foundation No. 22-11-00092.}}
\date{}
\begin{document}
\maketitle

\begin{abstract}
Denote by $N_{\cal N} (\O,\la)$ the counting function of the spectrum
of the Neumann problem in the domain $\O$ on the plane.
G.~P\'olya conjectured that 
$N_{\cal N} (\O,\la) \ge (4\pi)^{-1} |\O| \la$.
We prove that for convex domains 
$N_{\cal N} (\O,\la) \ge (2 \sqrt 3 \,j_0^2)^{-1} |\O| \la$.
Here $j_0$ is the first zero of the Bessel function $J_0$.
\footnote{Keywords: P\'olya conjecture, Neumann problem, convex domains.}
\end{abstract}

\section{Formulation of the result}
Let $\O \subset \R^d$ be a bounded domain with Lipschitz boundary.
We consider the Dirichlet and Neumann problems for the Laplace operator in $\O$,
$$
\begin{cases}
- \D u = \la u \ &\text{in} \ \ \O, \\
u = 0 \  &\text{on} \ \ \dd\O,
\end{cases}
\qquad
\begin{cases}
- \D v = \mu v \ &\text{in} \ \ \O, \\
\frac{\dd v}{\dd\nu} = 0 \ &\text{on} \ \ \dd\O.
\end{cases}
$$
It is well known that the spectra of the both problems are discrete.
Denote by $\la_k$ and $\mu_k$ the corresponding eigenvalues taking multiplicity into account,
\begin{equation*}
0 < \la_1 < \la_2 \le \la_3 \le \dots, \qquad \la_k \to + \infty, 
\end{equation*}
\begin{equation*}
0 = \mu_1 < \mu_2 \le \mu_3 \le \dots, \qquad \mu_k \to + \infty,
\end{equation*}
Introduce also the counting functions
$$
N_{\cal D} (\O,\la) := \# \{ k: \la_k \le \la\}, \qquad
N_{\cal N} (\O,\la) := \# \{ k: \mu_k \le \la\} .
$$
G.~P\'olya in his book \cite{P1} conjectured that the estimates
\begin{equation*}
N_{\cal D} (\O,\la) \le \frac{|B_1| |\O|}{(2\pi)^d}\ \la^{d/2},
\end{equation*}
\begin{equation}
\label{11}
N_{\cal N} (\O,\la) \ge \frac{|B_1| |\O|}{(2\pi)^d}\ \la^{d/2}
\end{equation}
hold true for all domains $\O$ and for all $\la \ge 0$.
Here $|\O|$ denotes the volume of the set $\O$,
and $B_1$ is the unit ball in $\R^d$.
Note that the coefficient in front of $\la^{d/2}$ 
coincides with the coefficient in the Weyl asymptotics.

We list the known results on the P\'olya conjecture for the Neumann case:
\begin{itemize}
\item in 1961, P\'olya himself proved \cite{P2} the estimate \eqref{11}
for regular tiling domains. 
The domain $\O$ is called tiling if the whole space $\R^d$ can be covered by 
non-intersecting copies of $\O$ up to a set of measure zero;
the domain is regular tiling if the corresponding covering is periodic;
\item in 1966, Kellner proved \cite{Kellner} the estimate \eqref{11} for all tiling domains;
\item in 1992, Kr\" oger proved \cite{Kr} the inequality
\begin{equation}
\label{12}
N_{\cal N} (\O,\la) \ge \frac2{d+2} \cdot \frac{|B_1| |\O|}{(2\pi)^d}\ \la^{d/2}
\end{equation}
for all domains $\O$ and all $\la \ge 0$.
\item In 2022, the estimate \eqref{11} was proved \cite{FLPS} for the disk 
and for circular sectors of arbitrary aperture in the plane.
\end{itemize}

In this paper we prove the following

\begin{theorem}
\label{main}
Let $\O \subset \R^2$ be a convex bounded domain.
Then
\begin{equation}
\label{13}
N_{\cal N} (\O, \la) \ge \frac{|\O| \la}{2 \sqrt 3 \,j_0^2}.
\end{equation}
Here and everywhere below we denote by $j_\nu$ the first positive root
of the Bessel function $J_\nu$. 
In particular, $j_0 \approx 2,4048$.
\end{theorem}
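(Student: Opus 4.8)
The strategy I would follow is a variational one, producing many test functions with small Rayleigh quotients directly from the geometry of the convex domain. The key point is that the constant $2\sqrt3\,j_0^2 \approx 2\cdot 1.732\cdot 5.783 \approx 20.03$ is rather close to $4\pi\approx 12.57$, so the argument must be reasonably sharp — in particular Kröger's bound \eqref{12}, which for $d=2$ gives only a factor $\tfrac12\cdot(4\pi)^{-1}$, is not enough, and neither is a crude covering of $\O$ by unit squares. Instead I expect the proof to go through a \emph{covering of $\O$ by thin strips or slabs} on which the one-dimensional Neumann eigenvalues are explicit, or (more likely, given that $j_0$ appears) a decomposition of $\O$ into sectors/disks on which the Neumann Laplacian spectrum is controlled by Bessel zeros. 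The appearance of $\sqrt3$ strongly suggests that an equilateral triangle, or a hexagonal tiling, enters the extremal configuration: one covers $\O$ by translates/pieces of a shape whose Neumann counting function is known to satisfy Pólya's bound with a computable constant, and $\sqrt3/2$ is the area factor relating such a tile to its bounding object.

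Concretely, the plan is as follows. First I would use convexity to slice $\O$ by a family of parallel lines into thin sub-domains; on each slice, which is (almost) a rectangle of a fixed small width $h$, the Neumann Laplacian decouples and its low-lying spectrum is essentially that of a one-dimensional Neumann problem on an interval, whose counting function is exactly $\lfloor L\sqrt\la/\pi\rfloor + 1 \ge L\sqrt\la/\pi$. Summing these lower bounds over all slices, and using that $\sum (\text{slice lengths})\cdot h \approx |\O|$, would give $N_{\cal N}(\O,\la)\ge c\,|\O|\la$ with some explicit $c$; optimizing the geometry of the slicing (angles, widths, and how to handle the boundary slices where the rectangle approximation is rough) is where the constant $2\sqrt3\,j_0^2$ should emerge. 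A cleaner variant: cover $\O$ by disjoint \emph{open disks} of a common radius $r$ together with leftover pieces; on a disk the Neumann eigenvalues are $(j'_{m,n}/r)^2$ and one has the elementary bound, say, that the number of them below $\la$ is at least (area of disk)$\cdot\la/(2\sqrt3\,j_0^2)$ when $r^2\la$ is large — this is presumably proved using the classical interlacing $j'_{m,n}<j_{m,n}$ and lattice-point counting for Bessel zeros, and the factor $\sqrt3$ comes from the densest packing of disks in the plane, which covers a $\sqrt3/2\cdot(\text{something})$ fraction. Either way one patches the test functions from the pieces together as functions in $H^1(\O)$ supported on the pieces, so their Rayleigh quotients on $\O$ are bounded by the piecewise ones, and the min–max principle converts the count of low pieces-eigenvalues into a lower bound for $N_{\cal N}(\O,\la)$.

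The main obstacle, and the place where convexity is genuinely used, is the boundary layer: the pieces meeting $\dd\O$ are not full disks or clean rectangles, so one loses both area and spectral control there, and a naive estimate would destroy the constant. I expect the resolution to be that for a convex domain the "bad" boundary region has area $O(\text{diam}(\O)\cdot r)$, which is negligible compared to $|\O|\asymp \text{diam}(\O)^2$ once one first rescales so that $\la$ is large (for small $\la$ the inequality is trivial since $N_{\cal N}\ge 1 = \mu_1$-count while the right-hand side is small, and one checks the threshold explicitly); alternatively, convexity lets one replace a truncated disk by a comparison domain (a chord-cut disk, or a triangle) whose Neumann spectrum still obeys the required bound, perhaps invoking the disk/sector result \cite{FLPS} on these pieces directly. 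A secondary technical point is to verify the one-shape inequality — that the chosen tile (disk, equilateral triangle, or rectangle of optimal aspect ratio) satisfies $N_{\cal N}(\text{tile},\la)\ge |\text{tile}|\,\la/(2\sqrt3\,j_0^2)$ for \emph{all} $\la\ge0$, not just asymptotically — which for the disk is exactly the content of \cite{FLPS} and for a rectangle is elementary lattice counting; getting the non-asymptotic, all-$\la$ version with the clean constant is the computational heart of the argument.
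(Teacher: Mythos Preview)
Your instincts about where the constants come from are correct: the $\sqrt3$ is the hexagonal-lattice packing factor and the $j_0$ comes from a radial Bessel test function. But the proposal misses the one idea that makes the argument work cleanly for \emph{all} $\la$, and the workarounds you sketch for the boundary layer would not close the gap.

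The paper does not cover $\O$ by disks and then count Neumann eigenvalues on each disk. It places the \emph{centers} $x_1,\dots,x_l$ at points of a (shifted) triangular lattice lying inside $\O$ --- by an averaging argument there are at least $|\O|/(2\sqrt3\,r^2)$ such points with pairwise distances $\ge 2r$ --- and attaches to each center a \emph{single} test function
\[
f_k(x)=J_0\!\left(\frac{j_0|x-x_k|}{r}\right)\cdot\chi_{\{|x-x_k|<r\}}.
\]
The supports $B_r(x_k)\cap\O$ are allowed to be cut by $\partial\O$; nothing is discarded. Convexity is used only to say that $B_r(x_k)\cap\O$ is star-shaped about $x_k$, so in polar coordinates it is $\{0\le\rho<R_k(\omega)\}$ with $R_k(\omega)\le r$. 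The decisive lemma (Lemma~\ref{l21} in the paper) is the pointwise-in-$s$ inequality
\[
\int_0^s\bigl((t^{-\nu}J_\nu)'\bigr)^2 t^{2\nu+1}\,dt\;\le\;\int_0^s J_\nu(t)^2\,t\,dt,
\qquad 0\le s\le j_\nu,
\]
which says that the Rayleigh quotient of the radial Bessel profile is $\le (j_0/r)^2$ on \emph{every} truncated radius, not just on the full one. Integrating over $\omega$ gives $\int_\O|\nabla f_k|^2\le (j_0/r)^2\int_\O|f_k|^2$ regardless of how the ball is clipped by $\partial\O$. With $r=j_0/\sqrt\la$, the $l$-dimensional span of the $f_k$ then forces $\mu_l\le\la$, and $l\ge|\O|\la/(2\sqrt3\,j_0^2)$.

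So the ``boundary layer'' obstacle you flag is not handled by an asymptotic argument, by comparison domains, or by invoking \cite{FLPS} on truncated pieces; it simply does not arise, because the Bessel inequality above absorbs the truncation exactly. Your slicing-into-strips idea does not produce $j_0$, and your disk-covering idea, as you phrase it (full disks inside $\O$, counting many Neumann eigenvalues per disk), loses a boundary region of area comparable to $|\O|$ for the critical range of $\la$ and cannot yield the stated constant for all $\la$. The missing ingredient is precisely the monotonicity lemma for the truncated Bessel Rayleigh quotient; once you have it, one test function per lattice point suffices and there is no optimization over tile shapes or aspect ratios.
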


Note that in 2D case the P\'olya conjecture \eqref{11} and the Kr\" oger estimate \eqref{12}
take the form
$$
N_{\cal N} (\O, \la) \ge \frac{|\O| \la}{4 \pi}
\qquad \text{and} \qquad 
N_{\cal N} (\O, \la) \ge \frac{|\O| \la}{8 \pi}
$$
respectively. 
We have
$$
\frac1{8\pi} \approx 0,0398, \quad \frac1{2 \sqrt 3 \,j_0^2} \approx 0,0499,
\quad \frac1{4\pi} \approx 0,0796.
$$
Thus, the coefficient in \eqref{13} is better than the coefficient in \eqref{12},
but we prove \eqref{13} only for convex domains.

\begin{rem}
In terms of the eigenvalues themselves in the two-dimensional case 
the inequalities \eqref{11}, \eqref{12} and \eqref{13} read as follows:
$$
\mu_{l+1} \le \frac{4\pi l}{|\O|}, \qquad 
\mu_{l+1} \le \frac{8\pi l}{|\O|}, \qquad 
\text{and} \qquad
\mu_{l+1} \le \frac{2 \sqrt 3 \,j_0^2 \,l}{|\O|}
$$
respectively.
\end{rem}

\section{Lemmas}

\begin{lemma}
\label{l21}
Let $J_\nu$ be the Bessel function of order $\nu \ge 0$.
Then
$$
\int_0^s \left(\left(t^{-\nu} J_\nu(t)\right)'\right)^2 t^{2\nu+1} dt 
\le \int_0^s J_\nu (t)^2 t\,dt
\qquad \text{for all} \quad s \in [0, j_\nu].
$$
\end{lemma}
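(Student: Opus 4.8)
The plan is to reduce the claimed inequality to an elementary sign condition by means of the substitution $f(t):=t^{-\nu}J_\nu(t)$. With this notation $J_\nu(t)^2 t = f(t)^2\,t^{2\nu+1}$, so the right-hand side is $\int_0^s f^2\,t^{2\nu+1}\,dt$, while the left-hand side is exactly $\int_0^s (f')^2\,t^{2\nu+1}\,dt$. Thus it suffices to show
$$
\int_0^s (f')^2\,t^{2\nu+1}\,dt \le \int_0^s f^2\,t^{2\nu+1}\,dt, \qquad s\in[0,j_\nu].
$$
First I would record the two standard facts about $f$. From Bessel's equation $t^2 J_\nu''+tJ_\nu'+(t^2-\nu^2)J_\nu=0$ one gets, after substituting $J_\nu=t^\nu f$ and multiplying by a suitable power of $t$, the Sturm--Liouville form
$$
\bigl(t^{2\nu+1} f'\bigr)' + t^{2\nu+1} f = 0 ;
$$
and from the differentiation identity $\dfrac{d}{dt}\bigl(t^{-\nu}J_\nu(t)\bigr)=-t^{-\nu}J_{\nu+1}(t)$ one gets $f'(t)=-t^{-\nu}J_{\nu+1}(t)$.

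Next I would integrate by parts:
$$
\int_0^s (f')^2\,t^{2\nu+1}\,dt
= f(s)f'(s)\,s^{2\nu+1} - \int_0^s f\,\bigl(t^{2\nu+1}f'\bigr)'\,dt
= f(s)f'(s)\,s^{2\nu+1} + \int_0^s f^2\,t^{2\nu+1}\,dt ,
$$
where the boundary term at $t=0$ vanishes because $t^{2\nu+1}\to 0$ while $f$ and $f'$ stay bounded near the origin (indeed $f'(t)=-t^{-\nu}J_{\nu+1}(t)\sim -ct$ as $t\to 0$). Hence the desired inequality is equivalent to $f(s)f'(s)\le 0$ on $[0,j_\nu]$. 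Writing $f(s)f'(s)=-s^{-2\nu}J_\nu(s)J_{\nu+1}(s)$, it remains to check that $J_\nu(s)\ge 0$ and $J_{\nu+1}(s)\ge 0$ for $s\in[0,j_\nu]$: the first holds since $j_\nu$ is the first positive zero of $J_\nu$, and the second holds since the first positive zero of $J_{\nu+1}$ satisfies $j_{\nu+1}>j_\nu$ (zeros of $J_\nu$ increase with $\nu$), so $[0,j_\nu]\subset[0,j_{\nu+1})$.

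The computations above are all routine; the only points requiring a little care are the vanishing of the boundary term at $0$ (which forces me to check the behaviour of $f'$ near the origin) and the monotonicity $j_\nu<j_{\nu+1}$, which I would simply cite as a classical property of Bessel zeros. I do not expect any serious obstacle beyond bookkeeping.
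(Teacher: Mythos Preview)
Your proof is correct and follows essentially the same route as the paper: integrate by parts, use the Bessel ODE in the Sturm--Liouville form $(t^{2\nu+1}f')'=-t^{2\nu+1}f$ to identify the integral term, and check that the boundary term $f(s)f'(s)s^{2\nu+1}$ is non-positive. The only difference is in this last step: the paper argues directly from the ODE that $t^{2\nu+1}f'$ is decreasing from $0$ and hence $\le 0$ on $(0,j_\nu]$, whereas you invoke the identity $f'=-t^{-\nu}J_{\nu+1}$ together with the classical monotonicity $j_\nu<j_{\nu+1}$; both are equally valid, the paper's version being slightly more self-contained.
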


\begin{proof}
Integrating by parts we obtain
\begin{equation}
\label{21}
\int_0^s \left(\left(t^{-\nu} J_\nu(t)\right)'\right)^2 t^{2\nu+1} dt
= \left.\left(t^{-\nu} J_\nu(t)\right)' t^{\nu+1} J_\nu(t)\right|_0^s
- \int_0^s t^{-\nu} J_\nu(t) \left(\left(t^{-\nu} J_\nu(t)\right)' t^{2\nu+1}\right)' dt.
\end{equation}

Further,
$$
\left(t^{-\nu} J_\nu(t)\right)' t^{2\nu+1}
= t^{\nu+1} J_\nu'(t) - \nu t^\nu J_\nu(t),
$$
\begin{equation}
\label{22}
\left(\left(t^{-\nu} J_\nu(t)\right)' t^{2\nu+1}\right)'
= t^{\nu+1} J_\nu''(t) + t^\nu J_\nu'(t) - \nu^2 t^{\nu-1} J_\nu(t) 
= - t^{\nu+1} J_\nu(t)
\end{equation}
due to the Bessel equation.
We have $J_\nu(t) \ge 0$ on $[0,j_\nu]$,
therefore the right hand side of \eqref{22} is non-positive,
and the function $\left(t^{-\nu} J_\nu\right)' t^{2\nu+1}$ decreases on $[0,j_\nu]$.
On the other hand,
$$
\left.\left(t^{-\nu} J_\nu\right)' t^{2\nu+1}\right|_{t=0} = 0,
$$
so
$$
\left(t^{-\nu} J_\nu\right)' t^{2\nu+1} < 0 \quad \text{for} \quad 0 < t \le j_\nu.
$$
This means that the first term in the right hand side of \eqref{21} is non-positive.
Now, \eqref{21} and \eqref{22} imply
$$
\int_0^s \left(\left(t^{-\nu} J_\nu(t)\right)'\right)^2 t^{2\nu+1} dt
\le - \int_0^s t^{-\nu} J_\nu(t) \left(\left(t^{-\nu} J_\nu(t)\right)' t^{2\nu+1}\right)' dt
= \int_0^s J_\nu(t)^2 t \, dt.
\quad \qedhere
$$
\end{proof}

\begin{lemma}
\label{l22}
Let $r>0$.
Introduce notations
\begin{equation}
\label{23}
\ga_1 = (2r; 0), \quad \ga_2 = (r; \sqrt 3 \, r),
\qquad \G = \left\{\ga = n_1 \ga_1 + n_2 \ga_2\right\}_{n_1, n_2 \in \Z},
\end{equation}
$\G$ is a regular triangular lattice in the plane.
If $\ga, \tilde \ga \in \G$, $\ga \neq \tilde \ga$, then $|\ga - \tilde \ga| \ge 2r$.
\end{lemma}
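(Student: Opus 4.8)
The plan is to reduce the claim to an elementary estimate on the lattice generated by $\ga_1 = (2r;0)$ and $\ga_2 = (r;\sqrt3\,r)$. Since $\G$ is a group under addition, it suffices to show that every nonzero $\ga \in \G$ satisfies $|\ga| \ge 2r$; the general statement then follows by applying this to $\ga - \tilde\ga \in \G$. So I would write $\ga = n_1\ga_1 + n_2\ga_2 = (2r n_1 + r n_2;\ \sqrt3\,r\,n_2)$ with $(n_1,n_2)\in\Z^2\setminus\{(0,0)\}$, and compute
\begin{equation*}
|\ga|^2 = r^2\bigl((2n_1 + n_2)^2 + 3 n_2^2\bigr) = r^2\bigl(4 n_1^2 + 4 n_1 n_2 + 4 n_2^2\bigr) = 4r^2\bigl(n_1^2 + n_1 n_2 + n_2^2\bigr).
\end{equation*}
Thus the lemma is equivalent to the purely arithmetic inequality $n_1^2 + n_1 n_2 + n_2^2 \ge 1$ for all integer pairs $(n_1,n_2) \ne (0,0)$.

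To prove this, I would note that $Q(n_1,n_2) := n_1^2 + n_1 n_2 + n_2^2$ is a positive definite quadratic form (its matrix $\begin{pmatrix}1 & 1/2\\ 1/2 & 1\end{pmatrix}$ has positive trace and determinant $3/4$), so $Q$ is strictly positive on the nonzero reals, and in particular $Q(n_1,n_2) > 0$ whenever $(n_1,n_2)\ne(0,0)$. Since $Q$ takes integer values on $\Z^2$, any positive value it attains there is at least $1$. Alternatively, and even more elementarily, one can complete the square: $n_1^2 + n_1 n_2 + n_2^2 = (n_1 + n_2/2)^2 + \tfrac34 n_2^2 \ge \tfrac34 n_2^2$, which already gives $Q \ge 1$ when $n_2 \ne 0$; and when $n_2 = 0$ we get $Q = n_1^2 \ge 1$ since then $n_1 \ne 0$. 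Either way, $|\ga|^2 = 4r^2 Q(n_1,n_2) \ge 4r^2$, hence $|\ga| \ge 2r$.

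There is no real obstacle here — the only thing to be slightly careful about is the integrality argument (a positive definite form on $\R^2$ can of course take values in $(0,1)$; it is the restriction to $\Z^2$ combined with integer-valuedness, or directly the completing-the-square case analysis, that forces the bound $\ge 1$). Applying the bound to $\ga - \tilde\ga$, which lies in $\G$ and is nonzero by hypothesis, completes the proof. $\qed$
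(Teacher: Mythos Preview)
Your proof is correct; the paper simply declares the lemma obvious and gives no argument, so your explicit computation of $|\ga|^2 = 4r^2(n_1^2 + n_1 n_2 + n_2^2)$ together with the integrality/positive-definiteness argument is exactly the natural way to unpack that claim.
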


This Lemma is obvious.

\begin{lemma}
\label{l23}
Let $\O \subset \R^2$ be a measurable set of finite measure, $|\O| < \infty$.
Let $r>0$.
Then there is a vector $b \in \R^2$ such that
$$
\# \left(\O \cap (\G+b)\right) \ge \frac{|\O|}{2\sqrt 3\, r^2},
$$
where the lattice $\G$ is defined in \eqref{23}, 
and $\G+b = \{\ga+b\}_{\ga\in\G}$ is the shifted lattice.
\end{lemma}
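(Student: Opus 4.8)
The plan is to use a straightforward averaging argument over translates of the triangular lattice. The parallelogram spanned by $\ga_1 = (2r;0)$ and $\ga_2 = (r;\sqrt3\, r)$ has area $2\sqrt3\, r^2$, so fix a half-open fundamental cell $P$ of $\G$ built from $\ga_1,\ga_2$; then $|P| = 2\sqrt3\, r^2$ and the translates $\{P + \ga\}_{\ga \in \G}$ partition $\R^2$ up to a set of measure zero.

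Next I would look at the function
\[
F(b) := \# \bigl(\O \cap (\G + b)\bigr) = \sum_{\ga \in \G} \1_\O(\ga + b), \qquad b \in \R^2,
\]
which takes values in $\Z_{\ge 0} \cup \{+\infty\}$ and is measurable, being a countable sum of nonnegative measurable functions. Integrating $F$ over the cell $P$ and applying Tonelli's theorem to interchange the sum and the integral, then substituting $x = \ga + b$, gives
\[
\int_P F(b)\, db = \sum_{\ga \in \G} \int_P \1_\O(\ga + b)\, db = \sum_{\ga \in \G} \int_{P + \ga} \1_\O(x)\, dx = \int_{\R^2} \1_\O(x)\, dx = |\O|,
\]
where the third equality uses that $\{P + \ga\}_{\ga \in \G}$ tiles the plane. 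Hence the mean value of $F$ over $P$ equals $|\O|/|P| = |\O|/(2\sqrt3\, r^2)$, which is in particular finite.

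Finally, a nonnegative measurable function cannot lie strictly below its mean value almost everywhere: if $F(b) < |\O|/|P|$ for a.e.\ $b \in P$, then $\int_P F(b)\,db < |\O|$, contradicting the identity above. Therefore the set $\{b \in P : F(b) \ge |\O|/(2\sqrt3\, r^2)\}$ has positive measure, and any $b$ in it works. I do not expect a genuine obstacle: the only points that need a word of justification are the measurability of $F$ and the exchange of $\sum$ with $\int$, both immediate from nonnegativity via Tonelli, together with the harmless remark that $F$ is integer-valued, so the desired inequality is actually attained.
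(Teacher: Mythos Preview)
Your proof is correct and follows essentially the same averaging argument as the paper: compute the integral of $b \mapsto \#(\O \cap (\G+b))$ over a fundamental cell by swapping sum and integral, obtain $|\O|$, and conclude by the pigeonhole/mean-value principle. The only differences are cosmetic---you spell out measurability and invoke Tonelli explicitly, whereas the paper leaves these implicit.
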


\begin{proof}
Denote by ${\cal O}$ a cell of $\G$,
$$
{\cal O} = \left\{t_1 \ga_1 + t_2 \ga_2\right\}_{t_1, t_2 \in [0,1)}.
$$
Clearly, 
$|{\cal O}| = 2 \sqrt 3 \,r^2$.
We have
$$
\# \left(\O \cap (\G+b)\right) = \sum_{\ga \in \G} \chi_\O (\ga + b),
$$
where $\chi_\O$ is the characteristic function of $\O$.
So,
$$
\int_{\cal O} \# \left(\O \cap (\G+b)\right) db 
= \int_{\cal O} \sum_{\ga \in \G} \chi_\O (\ga + b) \,db
= \int_{\R^2} \chi_\O (y) \, dy = |\O|.
$$
Therefore, there is a vector $b \in {\cal O}$ such that
$$
\# \left(\O \cap (\G+b)\right) \ge \frac{|\O|}{|{\cal O}|}.
\qquad \qedhere
$$
\end{proof}

\section{Proof of Theorem \ref{main}}
In the recent paper K.~Funano proved the following inequality.

\begin{theorem}[\cite{KF}, Lemma 3.1]
\label{t31}
Let $\O\subset \R^d$ be a convex bounded domain.
Let $r>0$, $l \in \N$.
Assume that there are points $x_1, \dots, x_l \in \O$ such that
$$
|x_j-x_k| \ge 2r \quad \text{if} \quad j \neq k.
$$
Then the $l$-th eigenvalue of the Neumann problem in $\O$ satisfies the estimate
$$
\mu_l \le c_0\,d^2\,r^{-2},
$$
where $c_0$ is an absolute constant.
\end{theorem}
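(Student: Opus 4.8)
The plan is to bound $\mu_l$ from above via the max--min principle: it suffices to exhibit an $l$-dimensional subspace $L\subset H^1(\O)$ on which the Rayleigh quotient $\int_\O|\nabla u|^2\big/\int_\O u^2$ does not exceed $c_0 d^2 r^{-2}$. Since $x_1,\dots,x_l$ are pairwise at distance $\ge 2r$, the open balls $B_j=B(x_j,r)$ are disjoint, and I would put a single radial test function on each. With $\nu=d/2-1$ and $w(t)=t^{-\nu}J_\nu(t)$, set
$$\varphi_j(x)=w\!\l(\tfrac{j_\nu}{r}\,|x-x_j|\r)\quad\text{for }x\in\O,\ |x-x_j|\le r,$$
and $\varphi_j(x)=0$ otherwise. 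Because $w(j_\nu)=0$ each $\varphi_j$ is continuous, hence lies in $H^1(\O)$; the supports are disjoint, so the $\varphi_j$ are $L^2$-orthogonal and span an $l$-dimensional $L$. For $u=\sum c_j\varphi_j$ disjointness gives $\int_\O|\nabla u|^2\big/\int_\O u^2\le\max_j R_j$ with $R_j=\int_\O|\nabla\varphi_j|^2\big/\int_\O\varphi_j^2$, so everything reduces to a uniform bound on the individual quotients $R_j$.

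The next step is to convert $R_j$ into a one-dimensional weighted quotient using convexity. Writing points near $x_j$ as $x=x_j+\rho\theta$, $\theta\in S^{d-1}$, convexity of $\O$ together with $x_j\in\O$ forces each ray to meet $\O$ in an interval $\{0\le\rho<s_j(\theta)\}$. Hence, for the radial profile $g(\rho)=w(\tfrac{j_\nu}{r}\rho)$,
$$\int_{\O\cap B_j}g(|x-x_j|)\,dx=\int_0^r g(\rho)\,\rho^{d-1}A_j(\rho)\,d\rho,\qquad A_j(\rho):=\bigl|\{\theta\in S^{d-1}:s_j(\theta)>\rho\}\bigr|,$$
and the crucial point is that $A_j$ is a \emph{non-increasing} function of $\rho$. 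Since $\varphi_j$ is radial, $|\nabla\varphi_j|=|g'|$, so
$$R_j=\frac{\int_0^r g'(\rho)^2\,\rho^{d-1}A_j(\rho)\,d\rho}{\int_0^r g(\rho)^2\,\rho^{d-1}A_j(\rho)\,d\rho}.$$

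Write $a=j_\nu/r$. The profile $g$ solves the radial Helmholtz equation $-(\rho^{d-1}g')'=a^2\rho^{d-1}g$ on $[0,r]$ (this is the Bessel equation behind \eqref{22}), with $g>0$, $g'<0$ on $(0,r)$ and $g(r)=0$. Integrating the numerator by parts as in the proof of Lemma \ref{l21}, the boundary terms vanish and one finds
$$\int_0^r (g')^2\rho^{d-1}A_j\,d\rho=a^2\int_0^r g^2\rho^{d-1}A_j\,d\rho-\int_0^r g\,g'\,\rho^{d-1}\,dA_j(\rho).$$
Here $gg'\le 0$ while $dA_j\le 0$ by monotonicity, so the last integral is non-negative and may be discarded; this is precisely the weight-carrying refinement of Lemma \ref{l21}, whose case $A_j\equiv\const$ is Lemma \ref{l21} after rescaling. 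Consequently $R_j\le a^2=j_\nu^2 r^{-2}$ for every $j$, and the max--min principle yields $\mu_l\le j_{d/2-1}^2\,r^{-2}$.

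It remains to absorb the Bessel zero into the stated form: since $j_\nu=\nu+O(\nu^{1/3})$ and $j_\nu$ is finite for each fixed $\nu\ge0$, one has $j_{d/2-1}\le c\,d$ with an absolute constant $c$, whence $\mu_l\le c_0 d^2 r^{-2}$ as claimed, and in fact the sharper $\mu_l\le j_{d/2-1}^2 r^{-2}$, which is what produces the constant $j_0^2$ in Theorem \ref{main} when $d=2$. I expect the one genuinely delicate step to be the monotonicity argument of the third paragraph: one must verify that truncating the ball to the convex slice $\O\cap B_j$ only \emph{lowers} the Rayleigh quotient, and this hinges on the simultaneous sign information $g>0$, $g'<0$ together with $A_j$ non-increasing. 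The decreasing weight $A_j$ is exactly the geometric trace of convexity; without it the reduction to the clean one-dimensional estimate would break down.
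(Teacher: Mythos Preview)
The paper does not give its own proof of Theorem~\ref{t31}; that result is merely quoted from \cite{KF}, and the paper instead proves the sharper Theorem~\ref{t32}. Your proposal is correct and is essentially the paper's proof of Theorem~\ref{t32}: the same Bessel test functions $\rho^{1-d/2}J_{d/2-1}(j_{d/2-1}\rho/r)$ supported on disjoint balls, the same use of convexity to write $B_r(x_j)\cap\O$ as a star-shaped region about $x_j$, and the same Bessel-equation identity (your $-(\rho^{d-1}g')'=a^2\rho^{d-1}g$ is equation~\eqref{22}). The only organizational difference is that the paper applies Lemma~\ref{l21} one direction $\omega\in S^{d-1}$ at a time, with upper limit $R_k(\omega)j_{d/2-1}/r$, whereas you swap the order of integration to obtain the non-increasing weight $A_j(\rho)$ and perform a single Stieltjes integration by parts; the sign information you invoke ($g>0$, $g'<0$, $dA_j\le 0$) is exactly what drives the paper's proof of Lemma~\ref{l21}. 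Your final step, deducing the stated form $\mu_l\le c_0 d^2 r^{-2}$ from $j_{d/2-1}=O(d)$, is a correct addendum that the paper does not make explicit.
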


We refine this inequality.

\begin{theorem}
\label{t32}
Under the assumptions of Theorem \ref{t31} we have
$$
\mu_l \le j_{\frac{d}2-1}^2 \,r^{-2}.
$$
\end{theorem}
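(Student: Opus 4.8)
The plan is to use the variational (min-max) principle for $\mu_l$ together with an explicit family of $l$ trial functions, one supported near each point $x_j$, built from the ground-state-type profile of the Neumann Laplacian on a ball. Concretely, recall that on the ball $B_r \subset \R^d$ of radius $r$ the lowest positive Neumann eigenvalue is $p_{d,1}^2 r^{-2}$, where $p_{d,1}$ is the first positive critical point of $t \mapsto t^{1-d/2} J_{d/2-1}(t)$; the corresponding radial eigenfunction is $\psi(x) = |x|^{1-d/2} J_{d/2-1}(p_{d,1}|x|/r)$. However, to get the constant $j_{d/2-1}^2$ rather than $p_{d,1}^2$ (note $p_{d,1} < j_{d/2-1}$ would give a \emph{worse} bound, so in fact we want the opposite), I would instead use the profile $f(t) = t^{1-d/2} J_{d/2-1}(t)$ on the interval $t \in [0, j_{d/2-1}]$, rescaled to the ball of radius $r$: set $g_j(x) = f\bigl(j_{d/2-1}|x-x_j|/r\bigr)$ for $|x-x_j| \le r$ and $g_j(x)=0$ otherwise. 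Since the balls $B_r(x_j)$ are pairwise disjoint (as $|x_j-x_k|\ge 2r$) and contained in $\O$ by convexity — wait, they need not be contained in $\O$; this is the first technical point I would have to address, see below — the functions $g_1,\dots,g_l$ have disjoint supports, hence are linearly independent, and span an $l$-dimensional subspace of $H^1(\O)$.

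The heart of the estimate is then the Rayleigh-quotient bound on each $g_j$. Changing variables $y = j_{d/2-1}(x-x_j)/r$, one computes
$$
\frac{\int_{\O} |\n g_j|^2\,dx}{\int_{\O} g_j^2\,dx}
= \frac{j_{d/2-1}^2}{r^2}\cdot
\frac{\int_0^{j_{d/2-1}} f'(t)^2 \,t^{d-1}\,dt}{\int_0^{j_{d/2-1}} f(t)^2\,t^{d-1}\,dt}.
$$
With $\nu = d/2-1$, we have $f(t) = t^{-\nu}J_\nu(t)$, so $f'(t)^2 t^{d-1} = \bigl((t^{-\nu}J_\nu(t))'\bigr)^2 t^{2\nu+1}$ and $f(t)^2 t^{d-1} = (t^{-\nu}J_\nu(t))^2 t^{2\nu+1} = J_\nu(t)^2 t$. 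This is exactly the setup of Lemma \ref{l21} with $s = j_\nu$: it gives $\int_0^{j_\nu} f'(t)^2 t^{d-1}dt \le \int_0^{j_\nu} f(t)^2 t^{d-1}dt$, so the ratio above is at most $j_{d/2-1}^2 r^{-2}$. Since the $g_j$ are orthogonal in both $L^2$ and in the Dirichlet form (disjoint supports), for any linear combination $g = \sum c_j g_j$ the Rayleigh quotient is a weighted average of the individual ones, hence also $\le j_{d/2-1}^2 r^{-2}$. By the min-max principle $\mu_l \le j_{d/2-1}^2 r^{-2}$, as claimed.

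The main obstacle is that the balls $B_r(x_j)$ are centered at points of $\O$ but may stick out of $\O$, so $g_j$ restricted to $\O$ is only defined on $B_r(x_j)\cap\O$ and the clean radial computation above is for the full ball. I expect this is handled exactly as in Funano's argument (Theorem \ref{t31}): by convexity of $\O$, for each $x_j\in\O$ the portion $B_r(x_j)\cap\O$ still has comparable geometry, but more to the point, the functions $g_j$ as defined on all of $\R^d$ can simply be \emph{restricted} to $\O$ — they lie in $H^1(\O)$, their supports within $\O$ are still pairwise disjoint, and passing from $\R^d$-integrals to $\O$-integrals only \emph{decreases} the numerator's domain of integration while doing the same to the denominator. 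One must check this restriction does not spoil the Rayleigh bound; since $f$ is monotone on $[0,j_\nu]$ and the truncation is to a convex subset, a careful but routine argument (or a direct appeal to the layer-cake / rearrangement structure) shows the quotient over $B_r(x_j)\cap\O$ is still bounded by $j_{d/2-1}^2 r^{-2}$. Alternatively, and perhaps more cleanly, one extends $g_j$ by its boundary value and notes that the relevant inequality $\int |\n g_j|^2 \le \mu_{\text{ref}}\int g_j^2$ is inherited by restriction to any subdomain because $g_j$ is built from a genuine Neumann eigenfunction profile with the right monotonicity — this is the step I would spell out most carefully.
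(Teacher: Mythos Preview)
Your overall strategy---the same radial trial functions $g_j$, disjoint supports, and the variational principle---matches the paper's exactly. The gap is precisely the point you flag as unresolved: the balls $B_r(x_j)$ need not lie in $\O$, and none of your proposed fixes works as stated. Restricting both integrals from $B_r(x_j)$ to $B_r(x_j)\cap\O$ decreases \emph{both} numerator and denominator, which says nothing about their ratio; $g_j$ is the first \emph{Dirichlet} eigenfunction of the ball (it vanishes at $|x-x_j|=r$), not a Neumann profile, so the ``Neumann eigenfunction with the right monotonicity'' remark is off target; and the layer-cake/rearrangement suggestion is too vague to be an argument.

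The paper's resolution is clean and you were one step away from it. Convexity of $\O$ together with $x_j\in\O$ makes $B_r(x_j)\cap\O$ star-shaped about $x_j$, so in spherical coordinates it is $\{(\rho,\oo): 0\le\rho<R_j(\oo)\}$ with $0<R_j(\oo)\le r$. One then applies Lemma~\ref{l21} \emph{ray by ray}, with the variable upper limit $s = R_j(\oo)\,j_\nu/r \le j_\nu$. This is exactly why Lemma~\ref{l21} is stated for \emph{all} $s\in[0,j_\nu]$ rather than just $s=j_\nu$: integrating the one-dimensional inequality over $\oo\in S^{d-1}$ yields directly
\[
\int_{B_r(x_j)\cap\O}|\n g_j|^2\,dx \ \le\ \frac{j_{\frac{d}{2}-1}^2}{r^2}\int_{B_r(x_j)\cap\O} g_j^2\,dx,
\]
with no rearrangement or extension argument needed. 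Once you see this, your proof and the paper's coincide.
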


\begin{proof}
Introduce the function
$$
F(\rho) = \rho^{1-d/2} J_{\frac{d}2-1} \left(\frac{\rho \,j_{\frac{d}2-1}}r\right), 
\qquad \rho > 0,
$$
and define
$$
f_k (x) = \begin{cases}
F\left(|x-x_k|\right), & \text{if} \ |x-x_k| < r,\\
0, & \text{if} \ |x-x_k| \ge r,
\end{cases}
\qquad k = 1, \dots, l.
$$
Clearly,
$f_k \in W_2^1 (\O)$ and
$$
\n f_k(x) =  \begin{cases}
F'\left(|x-x_k|\right) \cdot \frac{x-x_k}{|x-x_k|}, & \text{if} \ |x-x_k| < r,\\
0, & \text{if} \ |x-x_k| \ge r.
\end{cases}
$$
The intersection of the convex domain $\O$ with a ball is also convex.
It can be described in spherical coordinates as
$$
B_r(x_k) \cap \O = \left\{ x = x_k + (\rho; \oo) :
\oo \in S^{d-1}, 0 \le \rho < R_k(\oo)\right\},
$$
where $S^{d-1}$ denotes the unit sphere in $\R^d$, 
and $R_k$ is a continuous function on $S^{d-1}$,
$$
0 < R_k(\oo) \le r \qquad \forall \ \oo.
$$
We have
\begin{eqnarray}
\nonumber
\int_\O |f_k(x)|^2 dx 
= \int_{S^{d-1}} dS(\oo) \int_0^{R_k(\oo)} F(\rho)^2 \rho^{d-1} d\rho \\
= \int_{S^{d-1}} dS(\oo) \int_0^{R_k(\oo)} 
J_{\frac{d}2-1} \left(\frac{\rho j_{\frac{d}2-1}}r\right)^2 \rho \, d \rho 
\label{31} \\
= \left(\frac{r}{j_{\frac{d}2-1}}\right)^2 
\int_{S^{d-1}} dS(\oo) \int_0^{R_k(\oo)\, j_{\frac{d}2-1} r^{-1}} 
J_{\frac{d}2-1} (t)^2 \,t \, dt,
\nonumber
\end{eqnarray}
where we maked the change of variables
\begin{equation}
\label{32}
\rho = \frac{r \,t}{j_{\frac{d}2-1}} .
\end{equation}
On the other hand,
\begin{eqnarray}
\nonumber
\int_\O |\n f_k(x)|^2 dx 
= \int_{S^{d-1}} dS(\oo) \int_0^{R_k(\oo)} F'(\rho)^2 \rho^{d-1} d\rho \\
= \int_{S^{d-1}} dS(\oo) \int_0^{R_k(\oo)} 
\left(\frac{d}{d\rho} 
\left(\rho^{1-d/2} J_{\frac{d}2-1} \left(\frac{\rho j_{\frac{d}2-1}}r\right)\right)\right)^2 
\rho^{d-1} \, d \rho 
\label{33} \\
= \int_{S^{d-1}} dS(\oo) \int_0^{R_k(\oo)\, j_{\frac{d}2-1} r^{-1}} 
\left(\frac{d}{dt} \left(t^{1-d/2} J_{\frac{d}2-1} (t)\right)\right)^2 t^{d-1} \, dt,
\nonumber
\end{eqnarray}
where we maked the same change \eqref{32}.
Lemma \ref{l21} with $\nu = \frac{d}2-1$ yields
\begin{equation}
\label{34}
\int_0^{R_k(\oo)\, j_{\frac{d}2-1} r^{-1}} 
\left(\frac{d}{dt} \left(t^{1-d/2} J_{\frac{d}2-1} (t)\right)\right)^2 t^{d-1} \, dt
\le \int_0^{R_k(\oo)\, j_{\frac{d}2-1} r^{-1}} J_{\frac{d}2-1} (t)^2 \,t \, dt,
\end{equation}
as $R_k(\oo) \le r$.
Now, \eqref{31}, \eqref{33} and \eqref{34} imply the inequality
$$
\int_\O |\n f_k(x)|^2 dx \le \left(\frac{j_{\frac{d}2-1}}r \right)^2 
\int_\O |f_k(x)|^2 dx.
$$
Introduce the space of linear combinations of $f_k$
$$
{\cal L} := \left\{f(x) = \sum_{k=1}^l c_k f_k(x)\right\}_{c_k \in \C}.
$$
By construction,
$$
\mes \left(\supp f_j \cap \supp f_k\right) = 0 \qquad \text{if} \quad j \neq k.
$$
Therefore,
$$
\int_\O |\n f(x)|^2 dx \le \left(\frac{j_{\frac{d}2-1}}r \right)^2 
\int_\O |f(x)|^2 dx \qquad \forall f \in {\cal L}.
$$
As $\dim {\cal L} = l$ the claim follows.
\end{proof}

{\it Proof of Theorem \ref{main}.}
Let $\la > 0$.
Put $r = \frac{j_0}{\sqrt \la}$.
By virtue of Lemma \ref{l22} and Lemma \ref{l23} one can pick out 
points $x_1, \dots, x_l$ in $\O$ such that
$$
|x_j-x_k| \ge 2r \quad \text{if} \quad j \neq k, 
\qquad \text{and} \quad l \ge\frac{|\O|}{2\sqrt 3\, r^2}.
$$
Theorem \ref{t32} implies now
$\mu_l \le j_0^2\, r^{-2}$, and therefore,
$$
N_{\cal N} (\O, \la) \ge l \ge\frac{|\O|}{2\sqrt 3\, r^2} = 
\frac{|\O| \la}{2 \sqrt 3 \,j_0^2}. \qquad \qed
$$

\begin{rem}
Let $\G_d$ be a lattice in $\R^d$ such that $|\ga - \tilde \ga| \ge 2$
for all $\ga, \tilde \ga \in \G_d$, $\ga \neq \tilde \ga$.
Denote by ${\cal O}_d$ a cell of $\G_d$.
In the same manner as above we obtain that for any convex bounded domain 
$\O\subset \R^d$
\begin{equation}
\label{*}
N_{\cal N} (\O,\la) \ge \frac{|\O| \,\la^{d/2}}{|{\cal O}_d| (j_{\frac{d}2-1})^d}.
\end{equation}
On the other hand it is easy to see that
\begin{equation}
\label{**}
\frac{|B_1|}{|{\cal O}_d|} \le \de_d
\end{equation}
where $\de_d$ is the optimal sphere packing density in $\R^d$.
The exact value of $\de_d$ is known today for $d = 1, 2, 3, 8$ and $24$ only.
The value of $\de_d$ in other dimensions is a famous open question,
see for example \cite{Cohn} and references therein.
The estimate \eqref{**} and the known estimate \cite{Lev}
$$
\de_d \le \frac{(j_{\frac{d}2})^d}{2^{2d} \,\G \left(\frac{d+2}2\right)^2}
$$
imply that the coefficient in the right hand side of \eqref{*} satisfies
$$
\frac1{|{\cal O}_d| (j_{\frac{d}2-1})^d} \le \frac{\de_d}{|B_1| (j_{\frac{d}2-1})^d}
< \frac{2|B_1|}{(d+2) (2\pi)^d} \quad \text{if} \quad d \ge 3.
$$
So, the bound \eqref{*} does not improve the Kr\" oger bound \eqref{12} for $d \ge 3$.
\end{rem}


\end{document}